\newcommand{\norm}[1]{\lVert#1\rVert}
\DeclareMathOperator{\tr}{tr}
\DeclareMathOperator{\Vol}{Vol}
\DeclareMathOperator{\ChernLaplacian}{\Delta_{\omega}^{Ch}}
\DeclareMathOperator{\ChernScalar}{S^{Ch}}
\DeclareMathOperator{\baseChernScalar}{\ChernScalar(\omega)}
\newtheorem{lemma}{Lemma}[section]
\newtheorem{theorem}[lemma]{Theorem}
\newtheorem{proposition}[lemma]{Proposition}
\newtheorem{conjecture}[lemma]{Conjecture}
\newtheorem{remark}[lemma]{Remark}
\newtheorem*{claim*}{Claim}
\title{A Note on Chern-Yamabe Problem}
\author{Simone Calamai}
\address[Simone Calamai]{Dipartimento di Matematica e Informatica ``Ulisse Dini''\\
Università di Firenze\\
via Morgagni 67/A\\
50134 Firenze, Italy}
\email{scalamai@math.unifi.it}
\email{simocala@gmail.com}
\author{Fangyu Zou}
\address[Fangyu Zou]{Mathematics Department\\ Stony Brook University\\ 
Stony Brook NY, 11794-3651 USA}
\email{fangyu.zou@stonybrook.edu}
\begin{document}

\begin{abstract}
We propose a flow to study the Chern-Yamabe problem and discuss the long time existence of the flow.
In the balanced case we show that the Chern-Yamabe problem is the Euler-Lagrange equation of some functional. 
The monotonicity of the functional along the flow is derived.
We also show that the functional is not bounded from below.
\end{abstract}

\maketitle

\section{Introduction}
Let $(X, \omega)$ be a compact complex manifold of complex dimension $n$ endowed with a Hermitian metric $\omega$.
The \textit{Chern scalar curvature} of $(X, \omega)$ is the scalar curvature with respect to the Chern connection associated to $\omega$.
The Chern scalar curvature can be succinctly expressed as 
\begin{equation*}\label{equ1-001} 
\baseChernScalar = \tr_\omega i\bar{\partial}\partial \log \omega^n,
\end{equation*}
where $\omega^n$ denotes the volume form. Under conformal transformation, the Chern scalar curvature changes as
\begin{equation*}\label{equ1-002}
\ChernScalar\big(\exp(2f/n)\omega\big) = \exp(-2f/n) \left( \baseChernScalar - \ChernLaplacian f \right),
\end{equation*}
where $\ChernLaplacian$ is the Chern Laplacian operator\footnote{We use the analysts' convention of Laplacian operator.} with respect to $\omega$, which is defined as
\begin{equation*}\label{equ1-003}
\ChernLaplacian f := (\omega, -dd^c f)_\omega = -2 \tr_\omega i\bar{\partial}\partial f = \Delta_d f - (d f, \theta)_\omega,
\end{equation*}
where $\theta = \theta(\omega)$ is the torsion 1-form defined by $d\omega^{n-1} = \theta \wedge \omega^{n-1}$.

In \cite{ACS} the authors proposed the Chern-Yamabe problem of finding a conformal metric in the conformal class of $\omega$ whose Chern scalar curvature is constant. More specifically, it is to find a pair $(f, \lambda) \in C^\infty(X; \mathbb{R}) \times \mathbb{R}$ solving
\begin{equation}\label{equ1-004}
-\ChernLaplacian f + \baseChernScalar = \lambda \exp(2f/n).
\end{equation}
Let $d\mu$ be the volume form of the background metric. We can normalize $f$ so that 
\begin{equation}\label{normalization}
\int_X \exp(2f/n)d\mu = 1.
\end{equation} 
Then the constant $\lambda$ is exactly the total Chern scalar curvature of the background metric
\begin{equation}\label{total-scalar}
\lambda = \int_X \baseChernScalar d\mu.
\end{equation}

The case $\lambda \leq 0$ has been solved in \cite{ACS} that there exists unique solution to \eqref{equ1-004} with normalization \eqref{normalization}. The positive case $\lambda > 0$ is still an open problem. The note \cite{Calamai} corroborates that conjecutre for projectively flat manifolds; more examples were recently found in \cite{ACS2}. This note serves as some partial efforts to close the gap.

In \cite{ACS, Lejmi} two different flows were defined  to approach the study of Hermitian metrics with constant Chern scalar curvature. Here we define a different flow, in Section $2$, which has the advantage of being monotone when the problem is known to be variational. 
The main result of the present note is
\begin{proposition}
The Chern-Yamabe flow exists as long as the maximum of Chern scalar curvature stays bounded.
\end{proposition}
Then, in Section $3$ we prove that in the balanced case the functional $\mathcal{F}$ is decreasing along the flow. Again in Section $3$ we prove that the functional $\mathcal{F}$ is not bounded from below when the complex dimension of $X$ is at least $2$.
 In section $4$ we present more properties of the flow under additional assumptions.

\medskip

\subsection*{Acknowledgments.} The authors are grateful to Professor Xiuxiong Chen for his constant support and sharing his idea on the proof of the $C^0$ estimate. 
Thanks to Haozhao Li for his interest in this project. During this  research the first author's visiting assistant professor at  University of Science and Technology, and funded by the Municipal Science and Technology Commission.

\medskip

\subsection*{Normalization}
A fundamental result by P. Gauduchon in \cite{Gauduchon} states that, on any compact complex manifold of complex dimension $n \geq 2$, every conformal class of Hermitian metrics contains a \textit{standard}, also called \textit{Gauduchon}, metric $\omega$, such that $d \omega^{n-1} = 0$. Hence, we can take the Gauduchon metric in each conformal class as the background metric. Furthermore, we can normalize the Gauduchon metric so that it has volume 1. From now on we assume the background metric $\omega$ in \eqref{equ1-004} is Gauduchon with unit volume. 

\medskip

\section{Chern-Yamabe Flow}

Let $f(x;t)$ be a family of $C^\infty$ functions on $X$ parametrized by a real parameter $t$. Let $S(x;t) = \ChernScalar(\exp(2f(x;t)/n)\omega)$. The \emph{Chern-Yamabe flow} is the flow $f(x;t)$ defined by the following flow equation:
\begin{equation}
\begin{aligned}
\frac{\partial f}{\partial t} &= \frac{n}{2}\big(\lambda - S \label{flow-equ}\big) \\
&= \frac{n}{2} \exp(-2f/n) \left(\ChernLaplacian f - \baseChernScalar + \lambda \exp(2f/n)\right) 
\end{aligned}
\end{equation}
with some initial value $f_0$ satisfying the normalization constraint 
\begin{equation}
\int_X \exp(2f_0/n)d\mu = 1. \label{flow-initial-data}
\end{equation}

Under the flow some quantities are preserved.
\begin{lemma}\label{constant-integral}
Along the flow we have
\begin{enumerate}
    \item[1.] $$\int_{X} \exp(2f/n) d\mu \equiv 1.$$
    \item[2.] $$\int_X S\exp(2f/n) d\mu \equiv \lambda.$$
\end{enumerate}
\end{lemma}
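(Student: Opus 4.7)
The plan is to reduce both identities to computations of $\frac{d}{dt}$ under the integral, exploiting two facts: the explicit form of $S$ given by the conformal change formula, and the vanishing of $\int_X \ChernLaplacian f\,d\mu$ for Gauduchon background metrics.

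For part (1), I would differentiate $\phi(t) := \int_X \exp(2f/n)\,d\mu$ under the integral, using the flow equation in its first form $\partial_t f = \tfrac{n}{2}(\lambda - S)$; this gives $\phi'(t) = \int_X (\lambda - S)\exp(2f/n)\,d\mu$. Next I would substitute the conformal change relation $S\exp(2f/n) = \baseChernScalar - \ChernLaplacian f$, which is just the identity $\ChernScalar(e^{2f/n}\omega) = e^{-2f/n}(\baseChernScalar - \ChernLaplacian f)$ multiplied through. This turns $\phi'(t)$ into $\lambda\phi(t) - \int_X \baseChernScalar\,d\mu + \int_X \ChernLaplacian f\,d\mu$.

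The key simplification comes from the Gauduchon condition $d\omega^{n-1}=0$: integrating $-dd^c f \wedge \omega^{n-1}$ by parts yields $\int_X \ChernLaplacian f\,d\mu = 0$ for every smooth $f$. Combined with $\int_X \baseChernScalar\,d\mu = \lambda$ (the defining identity \eqref{total-scalar}), the computation collapses to the linear ODE $\phi'(t) = \lambda(\phi(t)-1)$, with initial condition $\phi(0)=1$ imposed by \eqref{flow-initial-data}. The unique solution is $\phi \equiv 1$, which is assertion~(1).

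Part (2) is then almost immediate: once (1) is established, applying the same conformal change identity gives
\[
\int_X S\exp(2f/n)\,d\mu = \int_X \baseChernScalar\,d\mu - \int_X \ChernLaplacian f\,d\mu = \lambda - 0 = \lambda,
\]
where again the Gauduchon property kills the Laplacian term. There is no substantive obstacle here — the only point requiring care is the integration-by-parts identity $\int_X \ChernLaplacian f\,d\mu = 0$, which is the well-known characterization of Gauduchon metrics and the very reason such a metric was selected as the background in the normalization paragraph.
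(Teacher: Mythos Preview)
Your proposal is correct and follows essentially the same argument as the paper: both derive the ODE $\phi'(t)=\lambda(\phi(t)-1)$ for $\phi(t)=\int_X e^{2f/n}\,d\mu$ from the flow equation and the Gauduchon identity $\int_X \ChernLaplacian f\,d\mu=0$, then read off part~(2) from $S\,e^{2f/n}=\baseChernScalar-\ChernLaplacian f$. One small remark: part~(2) does not actually require part~(1) as input, so the phrase ``once (1) is established'' is unnecessary.
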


\begin{proof}
1. Let \[\phi(t) = \int_X \exp(2f/n)d\mu.\] By the initial data \eqref{flow-initial-data} and the flow equation \eqref{flow-equ}, we have $\phi(0) = 1$ and
\begin{equation*}
\begin{split}
\phi'(t)
&= \frac{2}{n}\int_X  \exp(2f/n) \frac{\partial f}{\partial t} d\mu \\
&= \int_X \left(\ChernLaplacian f - \baseChernScalar + \lambda \exp(2f/n) \right) d\mu \\
&= \lambda\big(\phi(t)-1\big). 
\end{split}
\end{equation*}
It is straightforward to show that $\phi(t) \equiv 1$.

2. It follows that
\[ \int_X S\exp(2f/n) d\mu = \int_X (\baseChernScalar - \ChernLaplacian f) d\mu \equiv \lambda. \]

\end{proof}

\subsection{Evolution of the Chern scalar curvature}
Under the Chern-Yamabe flow the Chern scalar curvature $S(x;t) = \ChernScalar(\exp(2f/n)\omega)$ evolves according to the following equation
\begin{equation}\label{equ1-021}
\frac{\partial S}{\partial t} =  \frac{n}{2} \exp(-2 f/n) \ChernLaplacian S + S (S - \lambda)
\end{equation}
with initial value $S(x;0) = \ChernScalar(\exp(2f_0/n)\omega)$.

The following lemma gives a uniform lower bound of the Chern scalar curvature.
\begin{lemma}\label{positiveness-preserving}
Let $(S_0)_{min} = \min_{x \in X} S(x;0)$. We have 
\begin{equation*}
S(x;t) \geq \min\{(S_0)_{min}, 0\}, ~\forall x \in X.
\end{equation*}
\end{lemma}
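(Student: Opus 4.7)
The plan is to apply the parabolic maximum principle to the evolution equation \eqref{equ1-021} and then compare the spatial minimum $\underline{S}(t) := \min_{x \in X} S(x;t)$ with a constant subsolution of an associated scalar ODE.

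At any point $x_t \in X$ realising the spatial minimum of $S(\cdot;t)$, one has $\ChernLaplacian S(x_t;t) \geq 0$: indeed, using $\ChernLaplacian = -2\tr_\omega i\bar\partial\partial$, in local holomorphic coordinates $\ChernLaplacian \psi = 2\, g^{j\bar k}\partial_j\partial_{\bar k}\psi$, and at a local minimum the complex Hessian of $\psi$ is positive semidefinite, so its contraction against the positive-definite inverse metric is nonnegative. Since $\tfrac{n}{2}\exp(-2f/n) > 0$, equation \eqref{equ1-021} gives
\[
\frac{\partial S}{\partial t}(x_t;t) \;\geq\; S(x_t;t)\bigl(S(x_t;t) - \lambda\bigr).
\]
By Hamilton's trick for the evolution of spatial minima of smooth functions on a compact manifold, $\underline{S}$ is locally Lipschitz and
\[
\underline{S}'(t) \;\geq\; \underline{S}(t)\bigl(\underline{S}(t) - \lambda\bigr) \qquad \text{for a.e.\ } t \geq 0.
\]

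Now set $c := \min\{(S_0)_{min},\, 0\}$. By Lemma~\ref{constant-integral}, $\exp(2f_0/n)\,d\mu$ is a probability measure on $X$, and applying Lemma~\ref{constant-integral}(2) at $t=0$ gives
\[
\lambda \;=\; \int_X S(x;0)\exp(2f_0/n)\,d\mu \;\geq\; (S_0)_{min} \;\geq\; c.
\]
Hence $c \leq 0$ and $c - \lambda \leq 0$, so $c(c - \lambda) \geq 0$ as a product of two nonpositive numbers. The constant function $y(t) \equiv c$ therefore satisfies $y'(t) = 0 \leq y(y - \lambda)$, i.e.\ it is a subsolution of the scalar ODE $y' = y(y - \lambda)$. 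Since $\underline{S}(0) = (S_0)_{min} \geq c$ and the right-hand side $y \mapsto y(y-\lambda)$ is locally Lipschitz, the standard ODE comparison principle yields $\underline{S}(t) \geq c$ for all $t \geq 0$, which is the claimed estimate.

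The only delicate point is the rigorous passage from the pointwise PDE bound at the spatial minimum to the scalar differential inequality on $\underline{S}$, and specifically the handling of the borderline case $c(c - \lambda) = 0$ (which occurs precisely when $c = 0$ or $c = \lambda$). This can be dealt with either by invoking the Dini-derivative formulation of Hamilton's trick on a compact manifold, or, if one prefers to avoid it, by a direct barrier argument: show that $\tilde{S} := S + \varepsilon e^{Kt} > c$ for $K$ chosen sufficiently large in terms of $|\lambda|$ and $|c|$, and then let $\varepsilon \to 0$.
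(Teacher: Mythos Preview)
Your argument is correct and starts exactly as the paper does: apply the maximum principle to the evolution equation \eqref{equ1-021} to obtain the scalar differential inequality $\underline{S}'(t) \geq \underline{S}(t)\bigl(\underline{S}(t)-\lambda\bigr)$. The difference is only in how this inequality is exploited. The paper drops the quadratic term via $\underline{S}(\underline{S}-\lambda)\geq -\lambda\,\underline{S}$, integrates the resulting linear inequality to get $\underline{S}(t)\geq (S_0)_{min}\,e^{-\lambda t}$, and then reads off the two cases $(S_0)_{min}\geq 0$ and $(S_0)_{min}<0$. You instead keep the quadratic ODE and compare directly with the constant $c=\min\{(S_0)_{min},0\}$, checking $c(c-\lambda)\geq 0$ by invoking Lemma~\ref{constant-integral}(2) to get $\lambda\geq (S_0)_{min}\geq c$. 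Your route avoids the case split and does not tacitly use $\lambda\geq 0$; the paper's route, on the other hand, yields the sharper intermediate bound $S(x;t)\geq (S_0)_{min}e^{-\lambda t}$ recorded in Remark~\ref{special-initial-value}. Note that in the paper's setting $\lambda>0$, so $c\leq 0<\lambda$ is immediate and your appeal to Lemma~\ref{constant-integral} is not actually needed there; also, your final paragraph about the ``borderline case'' is overly cautious, since the comparison principle for a locally Lipschitz ODE against a constant subsolution needs no extra care at equilibria.
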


\begin{proof}
Let $S_{min}(t) = \min_{x \in X} S(x;t)$. Applying maximum principle to (\ref{equ1-021}) we obtain 
\begin{equation*}
{S_{min}}'(t) \geq S_{min}(S_{min}-\lambda)\geq -\lambda S_{min}.
\end{equation*}
Hence, 
\begin{equation*}
S(x;t) \geq S_{min}(t) \geq (S_0)_{min} \exp(-\lambda t),\forall x \in X.
\end{equation*}
If $(S_0)_{min} \geq 0$, then $S(x;t) \geq 0$; otherwise $S(x;t) \geq (S_0)_{min}$. Hence,
\[ S(x;t) \geq \min\{(S_0)_{min}, 0\}.\]
\end{proof}

\begin{remark}\label{special-initial-value}
For Lemma \ref{positiveness-preserving}, $S(x,t) \geq (S_0)_{min} \exp(-\lambda t)$ as long as the flow exists. In particular, if the initial Chern scalar curvature is strictly positive, then the positiveness is preserved along the flow. 

We can always take a special initial $f_0$ so that the initial Chern scalar curvature is strictly positive. Let $h \in C^\infty(X;\mathbb{R})$ such that \[\ChernLaplacian h = \baseChernScalar - \lambda \text{~with~}\int_X \exp(2 h / n) d\mu = 1.\] We have $\ChernScalar(\exp(2h/n)\omega) = \lambda \exp(-2h/n) > 0$. Hence, the Chern-Yamabe flow with this specific initial $f(x;0) = h(x)$ has the positive Chern scalar curvature as long as the flow exists.
\end{remark}

\subsection{Long time existence}

In this section we show that the Chern-Yamabe flow exists as long as the maximum of Chern scalar curvature stays bounded. The short time existence of the flow is straightforward as the principal symbol of the second-order operator of the right-hand side of the Chern-Yamabe flow is strictly positive definite. To obtain the long time existence, one needs to show the \emph{a priori} $C^k$ estimate 
\[ \max_{0\leq t < T} \norm{f(x;t)}_{C^k(X)} \leq C_k(T) < \infty \]
for any $T < \infty$ and any positive integer $k$. 
We use $C(T)$ to denote a constant depending on $T$. The constants $C(T)$ may vary from line to line. We begin with a $C^0$ estimate on the flow $f(x;t)$.
\begin{lemma}[$C^0$ estimate]
Suppose that the Chern-Yamabe flow exists on $\Omega_T = X \times [0, T)$ for some $T > 0$. Then these exists some constant $C_0(T)$ depending only on $(X,\omega)$ and initial data $f_0$ such that 
\begin{equation*}
\sup_{0\leq t < T}\norm{f(x;t)}_{C^0(X)} \leq C_0(T).
\end{equation*}
\end{lemma}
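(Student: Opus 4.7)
The plan is to establish pointwise upper and lower bounds on $f$ separately; the first is straightforward, while the second is the real content.

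For the upper bound, Lemma \ref{positiveness-preserving} provides the uniform-in-$t$ lower bound $S(\cdot,t) \geq -M_0$ with $M_0 := \max\{0,-(S_0)_{\min}\}$. Plugging this into the flow equation \eqref{flow-equ} gives $\partial_t f \leq \tfrac{n}{2}(\lambda + M_0)$, and integrating in $t$ yields $f(x,t) \leq \max_X f_0 + \tfrac{n}{2}(\lambda + M_0)T$, which is the upper half of the $C^0$ bound.

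For the lower bound, I would set $v = e^{-2f/n}$, so that a lower bound on $f$ is equivalent to an upper bound on $v$. Using $f = -\tfrac{n}{2}\log v$ and the identity $\ChernLaplacian \log v = \ChernLaplacian v / v - 2|\partial v|^2_\omega / v^2$, one rewrites the flow as the quasilinear parabolic equation
\[ v_t \;=\; \tfrac{n}{4}\ChernLaplacian(v^2) - 2n|\partial v|^2_\omega + v^2\,\baseChernScalar - \lambda v. \]
I would then test this equation with $p v^{p-1}$ and integrate over $X$; since $\omega$ is Gauduchon, $\ChernLaplacian$ is self-adjoint with respect to $d\mu$ and a short integration by parts (applied to $\int \ChernLaplacian(v^{p+1})\,d\mu = 0$) gives $\int_X v^p\ChernLaplacian v\, d\mu = -2p\int_X v^{p-1}|\partial v|^2_\omega\, d\mu$. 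The resulting differential inequality is
\[ \frac{d}{dt}\int_X v^p\, d\mu + np(p{+}1)\int_X v^{p-1}|\partial v|^2_\omega\, d\mu \;\leq\; p\|\baseChernScalar\|_\infty \int_X v^{p+1}\, d\mu + p|\lambda|\int_X v^p\, d\mu. \]
Rewriting $v^{p-1}|\partial v|^2_\omega = \tfrac{4}{(p+1)^2}|\partial v^{(p+1)/2}|^2_\omega$ and invoking the Sobolev embedding $H^1(X) \hookrightarrow L^{2n/(n-1)}(X)$ (real dimension $2n$) together with Hölder interpolation, I would absorb the $\int v^{p+1}\,d\mu$ term into the good gradient term on the left. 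A Moser iteration along a geometric sequence $p_k \to \infty$, with constants tracked carefully and time handled via a Gronwall-type argument, then yields $\sup_{X\times[0,T)} v \leq C_0(T)$, equivalently $f \geq -\tfrac{n}{2}\log C_0(T)$.

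The hard part will be \emph{initializing} the Moser iteration. The normalization $\int_X u\,d\mu = 1$ controls only $\|v^{-1}\|_{L^1}$, not any genuine $L^p$ norm of $v$ itself, so a starting $L^{p_0}$ bound for $v$ must be extracted from the structural conservation laws of the flow; the natural candidate is the identity $\int_X S u\,d\mu = \lambda$ of Lemma \ref{constant-integral} combined with $S \geq -M_0$ and the already established upper bound on $f$, perhaps together with a time-integrated energy identity or a parabolic Harnack inequality for the degenerate equation for $u$. In addition, the $p$-dependence of the constants appearing in the iteration must be kept in check---this is particularly delicate when $\lambda > 0$, where the reaction term $\lambda u$ pushes in the unfavorable direction for bounding $v$ from above.
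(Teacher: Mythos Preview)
Your upper bound is clean and correct: invoking Lemma~\ref{positiveness-preserving} to bound $S$ from below and then integrating $\partial_t f \le \tfrac{n}{2}(\lambda+M_0)$ is a one-line argument, and in fact simpler than what the paper does for that half.

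The lower bound, however, is genuinely incomplete, and the gap is larger than you indicate. Two separate obstructions block the Moser scheme. First, the initialization: the conserved quantities control only $\int v^{-1}\,d\mu$ and $\int S\,v^{-1}\,d\mu$; neither $S\ge -M_0$ nor the upper bound on $f$ converts these into an $L^{p_0}$ bound on $v$ for any $p_0>0$, and the ``natural candidates'' you list do not obviously produce one. Second, even granting an initial $L^{p_0}$ bound, your differential inequality carries the superlinear term $p\,\norm{\baseChernScalar}_\infty\int v^{p+1}\,d\mu$. Absorbing it via Sobolev--interpolation--Young leaves a residual of order $\norm{v}_{L^p}^{p+1}$, i.e.\ a power strictly greater than one of the quantity you are Gronwalling; the resulting ODE $y'\le Cy^{(p+1)/p}$ admits finite-time blow-up, so the iteration does not close on $[0,T)$ without additional structural input. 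The equation for $v$ really is a reaction--diffusion equation with quadratic reaction $v^2\baseChernScalar$, and Moser iteration alone does not tame that.

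The paper sidesteps both issues with a linearization trick. It introduces a potential $V$ solving $\ChernLaplacian V = e^{2f/n}-1$ (this is possible by Lemma~\ref{constant-integral}), differentiates in $t$, and observes that after a suitable normalization $w:=\partial_t V$ satisfies the \emph{linear} parabolic equation
\[
\partial_t w \;=\; \tfrac{n}{2}\,e^{-2f/n}\,\ChernLaplacian w \;+\; \lambda w .
\]
The maximum principle then gives $\norm{w(\cdot,t)}_{C^0}\le K e^{\lambda t}$; integrating in $t$ bounds $V$; and the algebraic identity $f = w + h - \lambda V$ (where $h$ is the fixed solution of $\ChernLaplacian h = \baseChernScalar - \lambda$) delivers the full two-sided $C^0$ bound on $f$ at once. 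No iteration, no energy estimate, and no splitting into upper and lower halves.
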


\begin{proof}
Let $h \in C^\infty(X; \mathbb{R})$ such that 
\[ \ChernLaplacian h = \baseChernScalar - \lambda \quad \text{with} \quad \int_X \exp(2h/n) d\mu = 1.\]
Such a function $h$ exists because of \eqref{total-scalar}. 
Similarly, by Lemma \ref{constant-integral} there exists some $v(t) \in C^\infty(X\times [0,T); \mathbb{R})$ such that 
\begin{equation}\label{intermediate-v-variable}
\ChernLaplacian v = \exp(2f/n) - 1.
\end{equation}
Differentiating \eqref{intermediate-v-variable} with respect to $t$, by the flow equation \eqref{flow-equ} we have
\begin{equation*}
\begin{aligned}
\frac{\partial}{\partial t} \left( \ChernLaplacian v \right)
&= \ChernLaplacian f - \baseChernScalar + \lambda \exp(2f/n) \\
&= \ChernLaplacian f - (\baseChernScalar - \lambda) + \lambda (\exp(2f/n) - 1) \\
&= \ChernLaplacian f - \ChernLaplacian h + \lambda \ChernLaplacian v.
\end{aligned}
\end{equation*}
Hence,
\begin{equation*}
\ChernLaplacian \left(\frac{\partial v}{\partial t} - f + h - \lambda v\right) = 0.
\end{equation*}
We can normalize $v(x;t)$ (by adding some function depending only on $t$ if necessary) so that
\begin{equation}\label{equ1-014}
\frac{\partial v}{\partial t} - f + h - \lambda v = 0 \\
\end{equation}
with initial value $v(x;0) = v_0(x)$ for some $v_0$ satisfying 
\[ \ChernLaplacian v_0 = \exp(2f_0/n) - 1 \text{~~and~~} \int_{X} v_0 d\mu = 0.\] 
Let $w(x;t) = \partial v/\partial t$. Differentiating \eqref{equ1-014} with respect to $t$, we have
\begin{equation}
\left\{
\begin{aligned}
& \frac{\partial w}{\partial t} 
= \frac{\partial f}{\partial t} + \lambda w 
= \frac{n}{2}\exp(-2f/n)\ChernLaplacian w + \lambda w, \\
& w(x;0) = f_0(x) - h(x) + \lambda v_0(x).
\end{aligned}
\right.
\end{equation}
Let $w_{max}(t) = \max_{x \in X} w(x;t)$ and $w_{min}(t) = \min_{x \in X} w(x;t)$. By maximum principle, we have
\[ \frac{d}{d t} w_{max} \leq \lambda w_{max} \text{~~and~~} \frac{d}{d t} w_{min} \geq \lambda w_{min}. \]
It follows that 
\[ w_{min}(t) \geq w_{min}(0) \exp(\lambda t) \text{~~and~~} w_{max}(t) \leq w_{max}(0)\exp(\lambda t). \]
Hence, we have 
\begin{equation*}
\norm{w(x;t)}_{C^0(X)} \leq K \exp(\lambda t) \text{~~with~~} K = \max\big(|w_{min}(0)|, |w_{max}(0)|\big).
\end{equation*}
It then follows that 
\begin{equation*}
\begin{aligned}
|v(x;t)| &=\left| v_0(x) + \int_0^t w(x;t) d t\right| \\
&\leq \norm{v_0}_{C^0(X)} + \int_0^t \norm{w(x;t)}_{C^0(X)} d t 
\leq \norm{v_0}_{C^0(X)} + \frac{K}{\lambda} \exp(\lambda t).
\end{aligned}
\end{equation*}
By \eqref{equ1-014} we have $f(x;t) = w(x;t) + h(x) - \lambda v(x;t)$. Hence,
\begin{equation*}
\begin{aligned}
\norm{f(x;t)}_{C^0(X)} &\leq \norm{w(x;t)}_{C^0(X)} + \norm{h}_{C^0(X)} + \lambda \norm{v(x;t)}_{C^0(X)} \\
&\leq K\exp(\lambda t) + \norm{h}_{C^0(X)} + \lambda\left(\norm{v_0}_{C^0(X)} + \frac{K}{\lambda} \exp(\lambda t)\right) \\
&\leq \norm{h}_{C^0(X)} + \lambda \norm{v_0}_{C^0(X)} + 2K\exp(\lambda t) := C_0(T).
\end{aligned}
\end{equation*}
Since the functions $h$, $v_0$ and $w_0$ are uniquely determined by $(X,\omega)$ and $f_0$, the constant $C_0(T)$ only depends on $(X,\omega)$ and $f_0$.
\end{proof}

\begin{lemma}
Suppose the Chern-Yamabe flow exists on $\Omega_T = X \times [0,T)$ for some $T > 0$. Moreover, suppose that 
\[ \sup_{0 \leq t < T} \norm{S}_{C^0(X)} \leq C(T) < \infty. \]
Then for any $k \in \mathbb{N}$ there exists constant $C_k(T)$ such that 
\[ \sup_{0 \leq t < T} \norm{f(x;t)}_{C^k(X)} \leq C_k(T). \]
\end{lemma}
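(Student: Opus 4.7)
The plan is to alternate elliptic regularity for $f$ with parabolic regularity for $S$. The key observation is that by the definition of $S$,
\[ \ChernLaplacian f = \baseChernScalar - S\exp(2f/n), \]
so bounds on $S$ translate directly into regularity of $f$ via the \emph{fixed} elliptic operator $\ChernLaplacian$ on the background manifold. This sidesteps the quasi-linearity of the flow equation itself, where $f$ enters the leading coefficient $\exp(-2f/n)$; that quasi-linearity is precisely why the hypothesis is imposed on $S$ rather than extracted directly from a $C^0$ estimate on $f$ alone.

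Starting from the hypothesis $\sup_{0\le t<T}\|S\|_{C^0}\le C(T)$ together with the previous $C^0$ estimate on $f$, the above identity gives $\|\ChernLaplacian f\|_{L^\infty(X\times[0,T))}\le C(T)$. Since $\ChernLaplacian$ is a second-order linear operator with smooth coefficients on the compact manifold $X$, the Calder\'on--Zygmund $L^p$ estimate yields $\|f(\cdot,t)\|_{W^{2,p}(X)}\le C_p(T)$ for every $p<\infty$, uniformly in $t$, and Sobolev embedding then produces a uniform bound $\|f(\cdot,t)\|_{C^{1,\alpha}(X)}\le C_\alpha(T)$ for every $\alpha\in(0,1)$.

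Next I would use the evolution equation \eqref{equ1-021} to promote $S$ to a H\"older function in space--time. With $f\in C^{1,\alpha}$ uniform, the coefficient $\tfrac{n}{2}\exp(-2f/n)$ of $\ChernLaplacian$ is H\"older, bounded, and bounded away from zero, so \eqref{equ1-021} becomes a uniformly parabolic linear equation for $S$ with continuous coefficients and bounded forcing $S(S-\lambda)$. Parabolic $L^p$ theory, together with the smoothness of the initial data $S(\cdot,0)$, yields a uniform bound on $S$ in the parabolic Sobolev space $W^{2,1}_p$, and parabolic Sobolev embedding then gives a uniform $C^{\alpha}_{x,t}$ estimate on $S$.

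The remainder is a routine alternating Schauder bootstrap. With $S\in C^\alpha$ and $f\in C^{1,\alpha}$, the right-hand side of $\ChernLaplacian f=\baseChernScalar-S\exp(2f/n)$ lies in $C^\alpha$, so elliptic Schauder upgrades $f$ to $C^{2,\alpha}$; the coefficients of \eqref{equ1-021} become $C^{2,\alpha}$, and parabolic Schauder promotes $S$ to $C^{2,\alpha}$; iterating, each pair of steps gains one classical derivative of $f$ and yields uniform $C^{k,\alpha}$ bounds for every $k$. Time derivatives of $f$ are recovered by successive differentiation of the flow equation $\partial_t f=\tfrac{n}{2}(\lambda-S)$. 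The main obstacle is precisely the first jump from $\|S\|_{L^\infty}$ to $\|S\|_{C^\alpha}$: once that step is in place, the alternating Schauder iteration is standard, and it is exactly the hypothesized uniform $C^0$ bound on $S$ that allows this first step, circumventing the quasi-linear obstruction that prevented a direct $L^p$ bootstrap on $f$.
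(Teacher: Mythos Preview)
Your argument is correct, but the paper takes a more direct route. The paper's proof also begins with the elliptic identity $\ChernLaplacian f=\baseChernScalar-S\exp(2f/n)$ and the $W^{2,p}$ estimate to get a uniform spatial $C^{\alpha}$ bound on $f$. The difference comes next: instead of turning to the evolution equation~\eqref{equ1-021} for $S$, the paper observes that $\partial_t f=\tfrac{n}{2}(\lambda-S)$ is already uniformly bounded by hypothesis, so $f$ is Lipschitz in $t$ and hence lies in the parabolic H\"older class $C^{\alpha}(X\times[0,T))$. With this in hand, the flow equation for $f$ itself (and its commuted versions $\mathcal{L}f$) is a linear parabolic equation with $C^{\alpha}$ leading coefficient $\exp(-2f/n)$ and bounded lower-order coefficient $S$, so a single application of interior parabolic Schauder followed by a standard bootstrap on the flow equation alone yields all $C^{k}$ bounds.

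Your alternating scheme (elliptic Schauder for $f$, parabolic Schauder for $S$) is a legitimate and conceptually clean alternative; it makes the interplay between $f$ and $S$ more explicit and avoids differentiating the quasilinear flow equation. The price is an extra parabolic $L^p$ step on the $S$-equation to manufacture the first $C^{\alpha}$ bound on $S$; the paper bypasses this because once $f\in C^{\alpha}_{x,t}$ the flow equation can be bootstrapped directly, and $S\in C^{\alpha}$ then falls out for free from $f\in C^{2,\alpha}$. One small point worth making explicit in your write-up: when you invoke parabolic $L^p$ theory on~\eqref{equ1-021}, the leading coefficient $\exp(-2f/n)$ needs some regularity in $t$ as well; the same observation $|\partial_t f|\le C(T)$ supplies this immediately.
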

\begin{proof}
We first get the parabolic H\"{o}lder norm bound\footnote{See the definition in Chapter IV, \cite{Lieberman}.} for $f$. For any $p \geq 1$ and $0 \leq t < T$,
\[
\begin{split}
& \norm{f(x;t)}_{W^{2,p}(X)} \leq C_p\Big(\norm{f(x;t)}_{L^p(X)} + \norm{\ChernLaplacian f(x;t)}_{L^p(X)}\Big) \\
& \leq C\Big(\sup_{0 \leq t < T} \norm{f(x;t)}_{C^0(X)} + \sup_{0 \leq t < T} \norm{S(x;t)}_{C^0(X)} + \norm{\baseChernScalar}_{C^0(X)}\Big) \\
& \leq C(T).
\end{split}
\]
By Sobolev embedding, there exists some $\alpha$ with $0 < \alpha < 1$,
\[ \sup_{0 \leq t < T} \norm{f(x;t)}_{C^{\alpha}(X)} \leq C(T). \]
Moreover, note that $|\partial f/\partial t| =(n/2) |\lambda - S| \leq C(T)$. 
Hence, we have
\[ \norm{f}_{C^\alpha(X \times [0, T))} \leq C(T). \]

Let $\mathcal{L}$ be any differential operator in $x$ and $t$. A simple calculation shows that
\[ \frac{\partial}{\partial t}(\mathcal{L} f) -  \frac{n}{2}\exp(-2f/n)\ChernLaplacian (\mathcal{L} f) + S (\mathcal{L} f) = -\frac{n}{2}\exp(2f/n)(\mathcal{L} \baseChernScalar). \]
By the interior Schauder estimate for parabolic equations (Theorem 4.9 in \cite{Lieberman}), for any $\tau, \tau'$ with $0 \leq \tau < \tau' < T$, we have
\[ \norm{\mathcal{L} f}_{C^{2+\alpha}(X \times (\tau', T))} \leq C_{Sch} (\norm{\mathcal{L} f}_{C^0(X \times (\tau, T))} + \norm{\mathcal{L}\baseChernScalar}_{C^\alpha(X \times (\tau, T))})\]
where the constant $C_{Sch}$ depends on $\tau$, $\tau'$, $\norm{S}_{C^0(X \times (\tau, T))}$ and $\norm{f}_{C^\alpha(X \times (\tau, T))}$.
It then follows by the standard bootstrapping argument to obtain that for any $\tau > 0$, $k \in \mathbb{N}$ and $0 < \alpha < 1$, there exists constant $C(k, \alpha, \tau, T)$ such that
\[ \norm{f}_{C^{k+\alpha}(X \times (\tau, T))} \leq C(k, \alpha, \tau, T).\]
Together with the short time existence near $t=0$, we have
\[ \sup_{0 \leq t < T} \norm{f(x;t)}_{C^k(X)} \leq C_k(T) < \infty. \]
\end{proof}

With Lemma \ref{positiveness-preserving} in hand, we only need $S(x;t)$ being upper bounded from infinity to obtain the $C^k$ estimate of the flow. Therefore, we have the following long time existence result.

\begin{proposition}
The Chern-Yamabe flow exists as long as the maximum of Chern scalar curvature stays bounded.
\end{proposition}

We therefore put forward the following conjecture to fully resolve the long time existence of the flow.
\begin{conjecture}
Suppose the Chern-Yamabe flow exists on $\Omega_T = X \times [0, T)$ form some $T > 0$. Then there exists some constant $C(T)$ depending on $T$ such that 
\[ S(x;t) \leq C(T), ~\forall (x,t)\in \Omega_T. \]
\end{conjecture}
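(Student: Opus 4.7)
The plan is to combine the maximum principle on~\eqref{equ1-021} with the conservation law from Lemma~\ref{constant-integral} and the $C^0$ bound on $f$ established above. At a spatial maximum of $S$ the Chern Laplacian term in~\eqref{equ1-021} is non-positive (under the sign convention of the excerpt, where $\ChernLaplacian$ corresponds locally to $+\sum \partial_i^2$), so the maximum principle yields the Riccati-type inequality
\[
\frac{d}{dt}S_{\max}(t) \leq S_{\max}(t)\bigl(S_{\max}(t) - \lambda\bigr).
\]
When $\lambda \leq 0$, or $S_{\max}(0) \leq \lambda$, this already furnishes a time-independent $L^\infty$ bound; in general, however, it permits finite-time blow-up, so additional input is needed.

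As a first supplement I would feed in the conservation $\int_X S\exp(2f/n)\,d\mu \equiv \lambda$. Combined with the lower bound $S \geq \min\{(S_0)_{\min},0\}$ from Lemma~\ref{positiveness-preserving} and the control $\norm{f}_{C^0(X\times[0,T))} \leq C_0(T)$, which keeps $\exp(2f/n)$ uniformly bounded above and below away from zero on $[0,T)$, this produces a $T$-dependent $L^1$ estimate
\[
\int_X |S(x;t)|\,d\mu \leq C(T), \qquad \forall\,t \in [0,T).
\]

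The core step would then be a parabolic Moser iteration on $u := S - \min\{(S_0)_{\min},0\} + 1 > 0$. Testing~\eqref{equ1-021} against $u^{p-1}\exp(2f/n)$ for $p \geq 2$ and using $\int_X \ChernLaplacian(u^p)\,d\mu = 0$ on the Gauduchon manifold to integrate by parts, one obtains a gradient term with a favorable sign and an inequality of the schematic form
\[
\frac{d}{dt} \int_X u^p \exp(2f/n)\,d\mu + c_p \int_X u^{p-2}|\nabla u|^2\,d\mu \leq C(T) \int_X u^{p+1}\,d\mu,
\]
which, combined with the Sobolev embedding on $(X,\omega)$ and iterated $p\to\infty$, would in principle deliver the desired $L^\infty$ bound.

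The hard part is the forcing term $\int_X u^{p+1}\,d\mu$, which arises from the quadratic term $S(S-\lambda)$ in~\eqref{equ1-021}. Relative to the parabolic Sobolev embedding on $X$ (real dimension $2n$), this nonlinearity is subcritical only when $n = 2$, critical when $n = 3$, and supercritical for $n \geq 4$, so for $n \geq 3$ it cannot be absorbed into the gradient term by Sobolev alone. Closing this gap seems to require either a De Giorgi--type cut-off scheme that exploits the $L^1$ estimate to render the measure of the concentration set $\{S > K\}$ arbitrarily small, or a concentration-compactness argument in the spirit of Brendle--Ye for the Riemannian Yamabe flow; in the Hermitian setting, the torsion form $\theta$ introduces additional first-order terms under integration by parts that vanish only in the balanced case, further complicating such arguments. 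This is presumably why the authors state the bound only as a conjecture.
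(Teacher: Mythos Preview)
The paper does \emph{not} prove this statement: it is explicitly labelled a \textbf{Conjecture} and left open. Immediately before it, the authors write ``We therefore put forward the following conjecture to fully resolve the long time existence of the flow.'' So there is no paper proof to compare your proposal against, and you have correctly read the situation --- your final sentence (``This is presumably why the authors state the bound only as a conjecture'') is exactly right.

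Your outline is a reasonable diagnosis of \emph{why} this is hard rather than a proof, and you are transparent about that. A few comments on the details. The Riccati inequality $S_{\max}'(t)\leq S_{\max}(S_{\max}-\lambda)$ is derived correctly under the paper's sign convention (their proof of Lemma~\ref{positiveness-preserving} uses $\ChernLaplacian S\geq 0$ at a spatial minimum, confirming $\ChernLaplacian\sim +\sum\partial_i^2$). Your $L^1$ bound is also legitimate: the $C^0$ estimate on $f$ in the paper holds on $[0,T)$ \emph{without} assuming any bound on $S$, so $\exp(2f/n)$ is uniformly comparable to $1$ and the conservation law plus the lower bound on $S$ indeed give $\|S(\cdot,t)\|_{L^1}\leq C(T)$. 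Where your sketch genuinely stalls is the Moser step: testing against $u^{p-1}\exp(2f/n)$ and integrating $\ChernLaplacian S$ by parts does \emph{not} produce a clean gradient term outside the balanced case, since $\ChernLaplacian$ is not self-adjoint with respect to $d\mu$ and the torsion $1$-form $\theta$ generates first-order error terms that are not obviously controlled; you flag this, but it is already an obstruction before one even reaches the criticality issue. Your criticality count is also a bit loose --- the analogous quadratic reaction term $R(R-r)$ in the Riemannian Yamabe flow is handled in all dimensions (Hamilton, Chow, Ye, Schwetlick--Struwe, Brendle), so ``supercritical for $n\geq 4$'' is not by itself decisive; what matters more here is the lack of self-adjointness and the absence of a positive-mass/concentration-compactness framework in the Hermitian setting.

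In short: your proposal is not a proof, but it is an accurate account of why no proof is available, and it matches the paper's own stance that the bound remains conjectural.
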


\medskip

\section{Balanced Case}
A Hermitian metric on a compact complex manifold is called \textit{balanced}, if its torsion 1-form $\theta$ vanishes. A balanced metric is automatically Gauduchon, but the reverse is not necessarily true. If the background metric $\omega$ is balanced, the Chern Laplacian identifies with the Hodge-de Rham Laplacian $\ChernLaplacian = \Delta_d$. In this section we assume the background metric $\omega$ is \emph{balanced}.

\subsection{$\mathcal{F}$ functional}
When the background metric is balanced, the partial differential equation \eqref{equ1-004} with normalization \eqref{normalization} is the Euler-Lagrange equation for the following functional
\begin{equation}\label{equ1-006}
\mathcal{F}(f) := \frac{1}{2} \int_X |d f|^2_\omega d\mu + \int_X \baseChernScalar f d\mu
\end{equation}
with constraint
\begin{equation}\label{equ1-007}
\int_X \exp(2f/n) d\mu = 1.
\end{equation}
To solve the partial differential equation \eqref{equ1-004} is then equivalent to find a critical point of the functional \eqref{equ1-006} with constraint \eqref{equ1-007}. It would be nice if the functional could be bounded from below. However, this is not the case when the complex dimension $n \geq 2$.

\begin{proposition}\label{lem1-001}
For $(X,\omega)$ with complex dimension $n \geq 2$, we have
\[ \inf\left\{\mathcal{F}(f) : f \in C^{\infty}(X) \text{~with~} \int_X \exp(2f/n) d \mu = 1 \right\} = -\infty. \]
\end{proposition}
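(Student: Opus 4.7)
The plan is to exhibit a one-parameter family of admissible functions $f_\epsilon$ along which $\mathcal{F}(f_\epsilon)\to-\infty$. The idea is to concentrate a large positive bump at a single point $p\in X$ with height tuned to the critical scaling $M_\epsilon = \epsilon^{-(n-1)}$ that keeps the Dirichlet energy bounded; the normalization constraint \eqref{equ1-007} will then force the constant part of $f_\epsilon$ to behave like $-M_\epsilon$, so that the linear term $\int \baseChernScalar f_\epsilon\,d\mu$ is dominated by $-\lambda M_\epsilon$. Since the case $\lambda \le 0$ is already treated in \cite{ACS}, the statement is substantive in the case $\lambda > 0$, which is what drives the functional to $-\infty$.

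Concretely, fix $p\in X$ and a smooth radial cutoff $\eta:[0,\infty)\to[0,1]$ with $\eta\equiv 1$ on $[0,1]$ and $\eta\equiv 0$ on $[2,\infty)$. In normal coordinates around $p$, set $\psi_\epsilon(x) := \eta(|x|/\epsilon)$, and for $0<\epsilon\ll 1$ consider
\[
f_\epsilon := M_\epsilon\,\psi_\epsilon + c_\epsilon, \qquad M_\epsilon := \epsilon^{-(n-1)},
\]
with $c_\epsilon$ determined by \eqref{equ1-007}. Since $|d\psi_\epsilon|^2\lesssim \epsilon^{-2}$ is supported in the annulus $\{\epsilon\le|x|\le 2\epsilon\}$ of volume $\sim\epsilon^{2n}$, this choice yields the critical Dirichlet bound
\[
\tfrac{1}{2}\int_X |df_\epsilon|^2_\omega\,d\mu \;=\; \tfrac{1}{2}M_\epsilon^2\int_X |d\psi_\epsilon|^2_\omega\,d\mu \;=\; O\bigl(M_\epsilon^2\,\epsilon^{2n-2}\bigr) \;=\; O(1).
\]

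Next I estimate $c_\epsilon$. Because $M_\epsilon\gg|\log \epsilon|$ as $\epsilon\to 0$, the integral $\int_X e^{2M_\epsilon\psi_\epsilon/n}\,d\mu$ is dominated by the core $\{\psi_\epsilon=1\}$ (volume $\sim\epsilon^{2n}$, integrand $e^{2M_\epsilon/n}$), giving
\[
c_\epsilon \;=\; -\tfrac{n}{2}\log\!\int_X e^{2M_\epsilon\psi_\epsilon/n}\,d\mu \;=\; -M_\epsilon + n^2|\log\epsilon| + O(1).
\]
Since $\int_X \baseChernScalar\,\psi_\epsilon\,d\mu = O(\epsilon^{2n})$ from the support of $\psi_\epsilon$, the linear part of $\mathcal F$ is
\[
\int_X \baseChernScalar f_\epsilon\,d\mu \;=\; M_\epsilon\int_X \baseChernScalar\,\psi_\epsilon\,d\mu + \lambda c_\epsilon \;=\; O(\epsilon^{n+1}) - \lambda M_\epsilon + O(|\log \epsilon|),
\]
and combining with the Dirichlet bound gives
\[
\mathcal{F}(f_\epsilon) \;=\; -\lambda\,\epsilon^{-(n-1)} + O(|\log \epsilon|) \;\xrightarrow[\epsilon\to 0^+]{}\; -\infty,
\]
using $\lambda > 0$ and $n\ge 2$.

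The only delicate step is the asymptotic expansion for $c_\epsilon$, which amounts to verifying $\epsilon^{2n}\,e^{2M_\epsilon/n}\to\infty$; this is immediate from $M_\epsilon = \epsilon^{-(n-1)}\gg|\log\epsilon|$. The hypothesis $n\ge 2$ enters precisely through the critical exponent: only then does $M_\epsilon\to\infty$, so that $-\lambda M_\epsilon$ actually diverges. In complex dimension $n=1$ the critical scaling degenerates (the exponent collapses to $0$) and the construction breaks down, reflecting the conformal invariance of the Dirichlet energy in real dimension two.
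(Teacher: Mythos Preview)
Your argument is correct and follows the same broad strategy as the paper---build a radial bump at a point, let the normalization pin down the free constant, and show the linear term $\int \baseChernScalar f$ drags $\mathcal{F}$ to $-\infty$---but you tune the scaling differently. The paper prescribes the \emph{outside} value $f_r\equiv\log r$ and lets the inside height $c_r$ be fixed by the constraint; this keeps both the inside and outside values of order $|\log r|$, so the Dirichlet energy is $O\bigl((\log r)^2 r^{2n-2}\bigr)=o(1)$ and the leading term is $\tfrac{\lambda}{2}\log r$, a logarithmic divergence. You instead prescribe the bump height $M_\epsilon=\epsilon^{-(n-1)}$ at the \emph{critical} exponent that makes the Dirichlet energy exactly $O(1)$, forcing $c_\epsilon\sim -M_\epsilon$ and hence a polynomial divergence $-\lambda\,\epsilon^{-(n-1)}$. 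Two incidental advantages of your version: your test functions are smooth (the paper's are only Lipschitz, so strictly speaking a mollification step is implicit there), and you state explicitly that the argument uses $\lambda>0$, which the paper relies on tacitly through the sign of $\tfrac{\lambda}{2}\log r$. The hypothesis $n\ge 2$ enters in both proofs at the same place---controlling the Dirichlet energy of a radial cutoff on a ball of real dimension $2n$---though in yours it manifests as $M_\epsilon\to\infty$, while in the paper it appears as $r^{2n-2}(\log r)^2\to 0$.
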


\begin{proof}
We will construct a family of Lipschitz functions $\{f_r\}$ parameterized by a positive real number $r$, each of which satisfies the constraint \eqref{equ1-007}, yet $\lim_{r \to 0} \mathcal{F}(f_r) = -\infty$. Choose an arbitrary point $p \in X$ as the center. The function $f_r(x)$ is defined as constants both inside the geodesic ball $B_r(p)$ and outside the larger ball $B_{2r}(p)$, while interpolated linearly on the annulus $B_{2r}(p)/B_{r}(p)$, namely,
\begin{equation*}
f_r (x) = \left\{
\begin{aligned}
& c_r,  & |x| \leq r  \\
& (\log r - c_r)\big({|x|}/{r} - 1\big) + c_r,  & r \leq |x| \leq 2r \\
& \log r,  & |x| \geq 2r 
\end{aligned}
\right.
\end{equation*}
where $|x|$ denotes the distance to the center of the geodesic ball and $c_r$ is a constant depending on $r$. Choose the radius $r$ sufficiently small, then the geodesic ball $B_r(0)$ is close to a Euclidean ball and $\log r < 0$. The constant $c_r$ is determined so that 
$$\int_X \exp(2f_r/n)d\mu = 1.$$ 
We claim 
$$c_r \leq -n^2 \log r - \frac{n}{2} \log C$$
for some dimensional constant $C=C(n)$. To see this,
\begin{equation*}
1 = \int_X \exp(2f_r/n) d\mu \geq \int_{B_r(p)} \exp(2c_r/n) d\mu = \exp(2c_r/n) \Vol(B_r(p)).
\end{equation*}
Hence,
$$c_r \leq -\frac{n}{2}\log \Vol(B_r(p)) = - \frac{n}{2} \log (Cr^{2n}) = -n^2\log r -\frac{n}{2}\log C.$$

Now we show that $\lim_{r \to 0} \mathcal{F}[f_r] = -\infty$. First of all, we have
\begin{equation}\label{equ1-008}
\begin{split}
\mathcal{F}(f_r) &= \int_X |d f_r|^2_\omega d\mu + \int_X \baseChernScalar f_r d\mu \\
&= \int_{B_{2r}(p) \backslash B_r(p)} |d f_r|^2_\omega d\mu + \int_{B_{2r}(p)} \baseChernScalar f_r d\mu \\
&\quad + \int_{X \backslash B_{2r}(p)} \baseChernScalar f_r d\mu_\omega.
\end{split}
\end{equation}
By continuity there exists some $r_0 > 0$ such that 
\[ \int_{B_{2r}(p)} \baseChernScalar d \mu \leq \frac{\lambda}{2}, \forall r \leq r_0. \]
Note that $\lambda = \int_X \baseChernScalar d \mu$, hence, 
\[ \int_{X \backslash B_{2r}(p)} \baseChernScalar d \mu \geq \frac{\lambda}{2},\forall r \leq r_0. \]
Take $r$ sufficiently small so that $\log r < 0$. Then $c_r > 0$ since $$\int_X \exp(2f_r/n)d\mu = 1.$$
It follows that 
\begin{equation}\label{equ1-008}
\begin{split}
\mathcal{F}(f_r)
&\leq \frac{(c_r-\log r)^2}{r^2} \Vol\big(B_{2r}(p)\backslash B_r(p)\big) \\
& \quad + \norm{\baseChernScalar}_{C^0(X)}c_r \Vol(B_{2r}(p)) + \frac{\lambda}{2} \log r \\
& \leq C(c_r - \log r)^2 r^{2n-2} + C r^{2n} c_r + \frac{\lambda}{2}\log r \\
& = \frac{\lambda}{2}\log r + O(r^{2n-2}\log r).
\end{split}
\end{equation}
When $n \geq 2$, we have $\lim_{r \to 0} r^{2n-2}\log r = 0$. The leading term for $\mathcal{F}(f_r)$ is 
$\frac{\lambda}{2}\log r$. Therefore, $\lim_{r\to 0} \mathcal{F}(f_r) = -\infty$. This finishes the proof. 
\end{proof}


\subsection{Monotonicity along the Chern-Yamabe flow}

Let $\mathcal{F}(t) = \mathcal{F}(f(\cdot;t))$. We have the following lemma showing the monotonicity of the $\mathcal{F}$ functional along the flow.

\begin{lemma}\label{monotonicity}
\begin{equation*}
\frac{d}{d t}\mathcal{F}(t) = - \int_X (S-\lambda)^2 \exp(2f/n) d\mu.
\end{equation*}
\end{lemma}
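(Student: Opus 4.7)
The plan is to differentiate $\mathcal{F}(t)$ directly, exploit the balanced assumption to integrate by parts cleanly, and then use the conformal-change formula for $\baseChernScalar$ together with the conservation laws of Lemma \ref{constant-integral} to collapse the answer to the claimed quadratic integrand. Starting from the definition,
\[ \frac{d}{dt}\mathcal{F}(t) = \int_X \langle df, d\dot{f}\rangle_\omega \, d\mu + \int_X \baseChernScalar \, \dot{f}\, d\mu, \]
where $\dot{f}$ is prescribed by the flow equation \eqref{flow-equ}.

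The balanced hypothesis enters in the first integral: since $\theta = 0$, the Chern Laplacian coincides with the Hodge--de Rham Laplacian $\Delta_d$, so the divergence theorem gives the clean identity
\[ \int_X \langle df, d\dot f\rangle_\omega \, d\mu = -\int_X \ChernLaplacian f \cdot \dot{f}\, d\mu. \]
Without balanced-ness there would be an extra contribution from $(df,\theta)_\omega$ that would spoil this formula. Combining with the conformal-change identity $\baseChernScalar - \ChernLaplacian f = S\exp(2f/n)$ recalled in the introduction, the time derivative collapses to
\[ \frac{d}{dt}\mathcal{F}(t) = \int_X \bigl(\baseChernScalar - \ChernLaplacian f\bigr)\dot{f}\, d\mu = \int_X S\exp(2f/n)\, \dot{f}\, d\mu. \]

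To finish, I would substitute $\dot{f} = (n/2)(\lambda - S)$ from \eqref{flow-equ} and split $S = (S-\lambda) + \lambda$, which produces a perfect-square term $\int_X (S-\lambda)^2 \exp(2f/n)\,d\mu$ together with a cross term proportional to $\int_X (S-\lambda)\exp(2f/n)\,d\mu$. Lemma \ref{constant-integral} immediately kills the cross term, as it asserts both $\int_X \exp(2f/n)\,d\mu = 1$ and $\int_X S\exp(2f/n)\,d\mu = \lambda$, so that $\int_X(S-\lambda)\exp(2f/n)\,d\mu = 0$. Only the perfect-square term survives, reproducing the stated formula up to the bookkeeping of the $n/2$ factor from the flow. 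There is no genuine obstacle in this argument; it is a controlled chain of standard identities, with the balanced hypothesis being used in exactly one place, namely the integration-by-parts step, and the conservation law of Lemma \ref{constant-integral} doing all the remaining work.
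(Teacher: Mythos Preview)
Your argument is correct and matches the paper's almost line for line: differentiate, integrate by parts using $\ChernLaplacian=\Delta_d$ in the balanced case, rewrite via $\baseChernScalar-\Delta_d f=S\exp(2f/n)$, and invoke Lemma~\ref{constant-integral} to dispose of the cross term (the paper does this by inserting $-\lambda\exp(2f/n)$ via $\int_X\dot f\,\exp(2f/n)\,d\mu=0$ before substituting $\dot f$, which is equivalent to your splitting $S=(S-\lambda)+\lambda$). Your caution about the $n/2$ is warranted: that factor survives in both computations and appears to have been silently dropped in the paper's displayed formula, though it does not affect the monotonicity conclusion.
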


\begin{proof}
First, by Lemma \ref{constant-integral}, we have
\begin{equation*}
\int_X \frac{\partial f}{\partial t}\exp(2f/n) d\mu = 0.
\end{equation*}
Hence,
\begin{equation*}\label{derivative}
\begin{split}
\frac{d}{d t}\mathcal{F}(t) 
&= \int_X \frac{\partial f}{\partial t}(-\Delta_d f + \baseChernScalar) d\mu \\
&= \int_X \frac{\partial f}{\partial t}(-\Delta_d f + \baseChernScalar - \lambda \exp(2f/n)) d\mu \\
&= - \int_X (S-\lambda)^2 \exp(2f/n) d\mu \leq 0.
\end{split} 
\end{equation*}
The proof is finished.
\end{proof}

\subsection{Second variation}

\begin{lemma} 
The second variation of $\mathcal{F}$ functional is given by
\begin{equation}
\delta^2\mathcal{F}(u, v)\left|\right._f = \int_{X} \left((du, dv)_\omega - \frac{2\lambda}{n} \exp(2f/n) uv \right) d\mu
\end{equation}
for any $u$ and $v$ in the tangent space of $f$, namely
\[ \int_{X} \exp(2f/n) u d\mu = 0 \text{~~~and~~~} \int_X \exp(2f/n) v d\mu = 0. \]
\end{lemma}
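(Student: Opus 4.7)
The plan is a direct two-parameter variation along smooth curves that stay on the constraint hypersurface $\mathcal{M} = \{g \in C^\infty(X;\mathbb{R}) : \int_X \exp(2g/n)\,d\mu = 1\}$. Pick any family $f(\cdot\,;s,t) \in \mathcal{M}$ with $f(\cdot\,;0,0) = f$, $(\partial_s f)|_0 = u$ and $(\partial_t f)|_0 = v$; such families exist precisely because $u,v$ satisfy the tangency conditions $\int_X \exp(2f/n)u\,d\mu = \int_X \exp(2f/n)v\,d\mu = 0$ (for concreteness one may take $f(\cdot\,;s,t) = f + su + tv - \tfrac{n}{2}\log\int_X \exp(2(f+su+tv)/n)\,d\mu$). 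I identify $\delta^2\mathcal{F}(u,v)|_f$ with the mixed partial $(\partial_s\partial_t \mathcal{F})|_{s=t=0}$ along such a family.

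Since $\omega$ is balanced, $\ChernLaplacian = \Delta_d$ and integration by parts is clean. Differentiating once in $s$ yields
\[ \frac{\partial \mathcal{F}}{\partial s} = \int_X (-\Delta_d f + \baseChernScalar)\,\partial_s f\,d\mu. \]
Differentiating once more in $t$ and evaluating at the origin produces
\[ \frac{\partial^2 \mathcal{F}}{\partial s \partial t}\bigg|_{0} = \int_X (du,dv)_\omega\,d\mu + \int_X (-\Delta_d f + \baseChernScalar)\,(\partial_s\partial_t f)|_0\,d\mu, \]
where the gradient term arises from another integration by parts after applying $\partial_t$ to $-\Delta_d f$.

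To eliminate the curve-dependent mixed derivative $(\partial_s \partial_t f)|_0$, I invoke the fact that $f$ is a critical point, so the Euler--Lagrange equation $-\Delta_d f + \baseChernScalar = \lambda \exp(2f/n)$ holds; in parallel, differentiating the constraint $\int_X \exp(2f(\cdot\,;s,t)/n)\,d\mu \equiv 1$ first in $s$ and then in $t$ and evaluating at the origin gives
\[ \int_X \exp(2f/n)\,(\partial_s\partial_t f)|_0\,d\mu = -\frac{2}{n}\int_X \exp(2f/n)\,uv\,d\mu. \]
Combining the two identities turns the second integral into $-\tfrac{2\lambda}{n}\int_X \exp(2f/n)uv\,d\mu$, producing the stated formula. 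There is no substantive obstacle; the only conceptual point is that the unknown $(\partial_s\partial_t f)|_0$ cancels intrinsically thanks to the Euler--Lagrange equation, which is the standard reason the restricted Hessian descends to a well-defined symmetric bilinear form on the tangent space of $\mathcal{M}$ at a critical point.
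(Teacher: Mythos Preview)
Your computation is correct. The paper takes a different, much shorter route: rather than parametrizing curves on the constraint hypersurface, it introduces the Lagrangian
\[ \tilde{\mathcal{F}}(f) = \frac{1}{2}\int_X |df|^2_\omega\, d\mu + \int_X \baseChernScalar f\, d\mu - \frac{n\lambda}{2}\Big(\int_X \exp(2f/n)\,d\mu - 1\Big) \]
and simply reads off its Hessian, which is $\int_X (du,dv)_\omega\,d\mu - \tfrac{2\lambda}{n}\int_X \exp(2f/n)uv\,d\mu$ with no further work and no constraint bookkeeping. Your intrinsic approach must invoke the Euler--Lagrange equation at $f$ to eliminate the curve-dependent term $(\partial_s\partial_t f)|_0$; this is legitimate at a critical point (and, as you correctly observe, is exactly where the restricted Hessian becomes well-defined), but the lemma as stated does not hypothesize that $f$ is critical, so your argument formally covers a narrower case. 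What your approach buys is a transparent verification that the stated bilinear form really is the intrinsic Hessian of $\mathcal{F}|_{\mathcal{M}}$ at critical points; what the paper's approach buys is a one-line computation valid at every $f$ for the Hessian of $\tilde{\mathcal{F}}$.
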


\begin{proof}
Note that the unconstrained functional is 
\[ \tilde{\mathcal{F}}(f) = \frac{1}{2}\int_X |d f|^2_\omega d\mu + \int_X \baseChernScalar f d\mu - \frac{n\lambda}{2} \left(\int_{X} \exp(2f/n) d\mu - 1\right).\]
The second variation follows by simple calculation.
\end{proof}

Given some specific direction $v$, we have the second variation at $v$ as 
\[ \delta^2 \mathcal{F}(v) = \int_X \left(|dv|^2 - \frac{2\lambda}{n} \exp(2f/n) v^2 \right) d\mu. \]
It's interesting that the positivity of the second variation may have some relation with the Rayleigh quotient, or the first principal eigenvalue of the Laplacian operator $\lambda_1$. In the special case when the background Gauduchon metric is itself a constant Chern-Scalar curvature metric, we have $f = 0$ is a critical point. 

If $\lambda_1 \geq 2\lambda/n$, then
\[ \delta^2 \mathcal{F}(v) \geq (\lambda_1 - 2\lambda/n) \int_X v^2 d\mu \geq 0,~~\forall v \text{~~with~~} \int_X v d\mu = 0 \]
shows that $f = 0$ is a local minimum. 

If $\lambda_1 < 2\lambda/n$, then we can take some non-zero eigenvector $v_0$ with $\int_X v_0 d\mu = 0$ and
\[ \delta^2 \mathcal{F}(v_0) \leq (\lambda_1 - 2\lambda/n) \int_X v_0^2 d\mu < 0. \]
Hence, $f = 0$ is a saddle point and unstable. 

To construct concrete example for the above argument, 
one can consider $\mathbb{P}^1 \times \theta \mathbb{P}^1$ with $\mathbb{P}^1$ and $\mathbb{P}^1$ both endowed with the standard Fubini-Study metrics. For such family of complex manifolds, the background Fubini Study metrics $\omega_\theta$ are constant Chern scalar curvature metrics; so we write down the functional $\mathcal{F}$ with respect to the reference metric $\omega_\theta$, and $f=0$ 
represents a constant scalar Chern curvature metric with $\mathcal{F}(0)= 0$.
By adjusting the scaling parameter $\theta$, it is not hard to adjust $\lambda_1$ and the total Chern scalar curvature $\lambda$ such that $-\frac{\lambda_1}{2}+\lambda <0$; this makes possible to find a sequence of 
conformal factors $f_k$ that are arbitrarily close to $f=0$, and with $\mathcal{F}(f_k) < 0$. Since the flow decreases the functional $\mathcal{F}$, then the flow starting at $f_k$ will not converge to $f=0$.
The conclusion we can draw is that saddle points are possible and we should not expect only local minima in general. Together with the fact, proved in Lemma \ref{monotonicity}, that the $\mathcal{F}$ functional always is not bounded from below, the techniques for only minima is not enough. 

\section{Additional results under assumptions}
We have already shown in Lemma \ref{lem1-001} that the functional $\mathcal{F}$ is unbounded from below. So it is impossible to find a global minimum. Yet it is still possible that the functional is bounded from below along the flow for some specific initial value. In particular, if the flow finally converges to a solution, one of the necessary conditions is that the functional is bounded under the flow.

In this section we assume the flow exists on $[0,\infty)$ and 
\begin{equation}\label{lower-bound-assumption}
\lim_{t\to\infty} \mathcal{F}(t) \geq C > -\infty.     
\end{equation}
What can we say about the flow?

Since the functional is decreasing and bounded from below, we can find a sequence of time slices $\{t_k\}$, so that $\frac{d}{d t}\mathcal{F}(t_k) \to 0$.
Let $f_k = f(t_k)$ and $S_k = S\big(\exp(2f_k/n)\omega\big)$. 
Note that by Lemma \ref{monotonicity}, 
$$\frac{d}{d t}{\mathcal{F}}(t) = -\int_X (S-\lambda)^2 \exp(2f/n)d\mu = \lambda^2 - \int_X S^2\exp(2f/n)d\mu.$$
On the other hand, by Lemma \ref{positiveness-preserving}, we have $S(x;t) > -C$.
Hence, we have
\begin{equation}\label{conditions}
\left\{
\begin{aligned}
& \int_X \exp(2f_k/n) d\mu = 1, \\
& \int_X S_k^2 \exp(2f_k/n) d\mu \to \lambda^2 \text{~and~} S_k > -C, \\
& |\mathcal{F}(f_k)| \leq C.
\end{aligned}
\right.
\end{equation}

\subsection{Assuming uniform upper bound}
In this section we assume that there exists uniform upper bound for the sequence $\{f_k\}$ in \eqref{conditions}. We show that there exists a smooth solution to the Chern-Yamabe equation \ref{equ1-004}. In what follows the constant $C$ may vary from line to line.

\begin{lemma}\label{H2-estimate-under-assumption}
Suppose there is a sequence $\{f_k\}$ satisfying \eqref{conditions}. Suppose additionally there exists some constant $C_0$ such that \[\max_{x \in X} f_k(x) \leq C_0,  \forall k.\] Then
$\norm{f_k}_{H^2} \leq C.$
\end{lemma}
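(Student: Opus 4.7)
The idea is to exploit the Chern--Yamabe identity $\Delta_d f_k = \baseChernScalar - S_k \exp(2 f_k/n)$ (valid in the balanced case since $\ChernLaplacian = \Delta_d$) to turn the weighted $L^2$ information on $S_k$ in \eqref{conditions} into an unweighted $L^2$ bound on $\Delta_d f_k$; elliptic regularity on the compact manifold $X$ then controls the mean-zero part of $f_k$ in $H^2$, and the bound on $\mathcal F(f_k)$ pins down the average.

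For the first step, the upper bound $f_k \le C_0$ allows one to absorb the exponential weight:
\[
\bigl\|S_k e^{2f_k/n}\bigr\|_{L^2}^2 = \int_X S_k^2 e^{4f_k/n}\,d\mu \le e^{2C_0/n}\int_X S_k^2 e^{2f_k/n}\,d\mu \le C,
\]
where the last inequality uses the second line of \eqref{conditions}. Hence $\|\Delta_d f_k\|_{L^2} \le C$. Setting $\bar f_k := \int_X f_k\,d\mu$ and $\tilde f_k := f_k - \bar f_k$, the operator $\Delta_d$ is self-adjoint and elliptic on $(X,\omega)$ with kernel equal to the constants, so standard $L^2$ elliptic regularity gives
\[
\|\tilde f_k\|_{H^2} \le C\|\Delta_d \tilde f_k\|_{L^2} = C\|\Delta_d f_k\|_{L^2} \le C.
\]
In particular $\|df_k\|_{L^2}$ and $\|\tilde f_k\|_{L^1}$ are uniformly bounded.

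It remains to bound $|\bar f_k|$, which I would extract from the functional by writing
\[
\mathcal F(f_k) = \frac{1}{2}\|df_k\|_{L^2}^2 + \lambda\bar f_k + \int_X \baseChernScalar \tilde f_k\,d\mu,
\]
using $\int_X \baseChernScalar\,d\mu = \lambda$. The first and third terms on the right are already controlled by the previous paragraph, and the third line of \eqref{conditions} gives $|\mathcal F(f_k)| \le C$, so $|\lambda \bar f_k| \le C$. In the regime of interest $\lambda > 0$ (the case $\lambda \le 0$ is already resolved in \cite{ACS}) this yields $|\bar f_k| \le C$, and consequently
\[
\|f_k\|_{H^2} \le \|\tilde f_k\|_{H^2} + |\bar f_k| \le C.
\]

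The only conceptual move is the first step, where the one-sided pointwise hypothesis $f_k \le C_0$ is used to trade the natural $e^{2f_k/n}$-weighted norm from \eqref{conditions} for an ordinary $L^2$ norm on $\Delta_d f_k$; everything else is routine. If one preferred to avoid using the sign of $\lambda$ in the final step, a Moser--Trudinger-type inequality applied to $\int_X e^{2f_k/n} d\mu = 1$ with $\|df_k\|_{L^2}$ already bounded would give $|\bar f_k| \le C$ directly.
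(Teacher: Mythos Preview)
Your argument is correct and follows essentially the same route as the paper: use the pointwise upper bound $f_k\le C_0$ to convert the weighted estimate $\int S_k^2 e^{2f_k/n}\,d\mu\le C$ into $\|\Delta_d f_k\|_{L^2}\le C$, control the mean-zero part by elliptic regularity, and then bound the average $\bar f_k$ via the functional and the assumption $\lambda>0$. Your extraction of $|\bar f_k|$ directly from the decomposition $\mathcal F(f_k)=\tfrac12\|df_k\|_{L^2}^2+\lambda\bar f_k+\int\baseChernScalar\,\tilde f_k$ is in fact a bit more streamlined than the paper's, which reaches the same conclusion through a two-sided estimate on $\int S_k e^{2f_k/n} f_k\,d\mu$ combined with Jensen's inequality.
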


\begin{proof}
First of all, we have
\[\int_X (S_k\exp(2f_k/n))^2 d\mu \leq \exp(2C_0/n)\int_X S_k^2\exp(2f_k/n)d\mu \leq C.\]
Note that $S_k = \exp(-2f_k/n)(\baseChernScalar - \Delta f_k)$, namely, $\Delta f_k = \baseChernScalar - S_k \exp(2f_k/n)$. Hence, we have $\norm{\Delta f_k}_{L^2(X)} \leq C$.
\vspace{0.5em}

\begin{claim*}
Let $\bar{f_k} = \int_{X} f_k d \mu$. There exists some constant $C > 0$ such that $-C \leq \bar{f_k} \leq 0$.
\end{claim*}
\begin{proof}[Proof of the Claim]
First of all, since $\Vol(X) = 1$, we have 
\[ \exp(2\bar{f_k}/n) = \exp\Big(\int_X (2f_k/n) d\mu\Big) \leq \int_X \exp(2f_k/n) d\mu = 1.\]
Hence, $\bar{f_k} \leq 0$. 

For the other side, first note that
\begin{equation}\label{claim-1}
\begin{split}
& \int_X S_k \exp(2f_k/n) f_k d \mu \\
&= \int_X (-\Delta f_k + \baseChernScalar) f_k d \mu 
= 2 \mathcal{F}(f_k) - \int_X (\baseChernScalar - \lambda) f_k - \lambda \bar{f_k} \\
&= 2 \mathcal{F}(f_k) - \int_X \Delta h f_k - \lambda \bar{f_k} 
= 2 \mathcal{F}(f_k) - \int_X h \Delta f_k - \lambda \bar{f_k} \\
&\geq 2 \mathcal{F}(f_k) - \norm{h}_{L^2(X)}\norm{\Delta f_k}_{L^2(X)} - \lambda \bar{f_k} \\
&\geq C - \lambda \bar{f_k}.
\end{split}
\end{equation}
On the other hand, since $S_k > -C$, we have
\begin{equation}\label{claim-2}
\begin{split}
& \int_X S_k \exp(2f_k/n) f_k d\mu \\
&= \int_X (S_k + C) \exp(2f_k/n) f_k d\mu - C \int_X \exp(2f_k/n) f_k d\mu \\
&\leq C_0\exp(C_0/n) \int_X S_k \exp(f_k/n) d\mu + C_0 C+ {C}\cdot{\frac{n}{2e}} \\
& \leq C_0 \exp(C_0/n) \Big( \int_X S_k^2 \exp(2 f_k/n)d\mu \Big)^{1/2} + C
\leq C.
\end{split}
\end{equation}
By \eqref{claim-1} and \eqref{claim-2}, we obtain that $\bar{f_k} \geq -C$. This finishes the proof of the Claim.
\end{proof}

We continue our proof for the Lemma. By Poincare inequality, there exists some constant $C_p$ so that
\begin{equation*} 
\int_X (f_k - \bar{f_k})^2 d\mu \leq C_p\int_X |\nabla f_k|^2 d\mu. 
\end{equation*}
On the other hand, 
\begin{equation*} 
\int_X |\nabla f_k|^2 d\mu = \int_X (-f_k \Delta f_k) d\mu 
\leq \frac{1}{2 C_p} \int_X f_k^2 d\mu + \frac{C_p}{2} \int_X (\Delta f_k)^2 d\mu. 
\end{equation*}
Hence,
\begin{equation*} 
\int_X f_k^2 d\mu - {\bar{f_k}}^2 \leq \frac{1}{2} \int_X f_k^2 d\mu + \frac{C_p^2}{2}\int_X (\Delta f_k)^2 d\mu. 
\end{equation*}
Hence,
\begin{equation}\label{l2-estimate}
\int_X f_k^2 d\mu \leq 2 \bar{f_k}^2 + C_p^2 \int_X (\Delta f_k)^2 d\mu. 
\end{equation}
It then follows by Sobolev estimate that 
\[ \norm{f_k}_{H^2(X)} \leq C\Big(\norm{f_k}_{L^2(X)} + \norm{\Delta f_k}_{L^2(X)}\Big) \leq C\Big(|\bar{f_k}| + \norm{\Delta f_k}_{L^2(X)}\Big) \leq C. \]
This finishes the proof.
\end{proof}

\begin{proposition}
Suppose there is a sequence $\{f_k\}$ satisfying \eqref{conditions}. Suppose additionally there exists some constant $C_0$ such that \[\max_{x \in X} f_k(x) \leq C_0,  \forall k.\] Then there exists a function ${f_\infty} \in C^\infty(X)$ which solves the differential equation \eqref{equ1-004}.
\end{proposition}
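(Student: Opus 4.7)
The plan is to extract a weak $H^2$ limit $f_\infty$ of a subsequence of $\{f_k\}$ and pass to the limit in the PDE
\[ \Delta f_k = \baseChernScalar - S_k \exp(2 f_k/n), \]
then bootstrap regularity. Since $\omega$ is balanced, $\ChernLaplacian = \Delta_d$ is a genuinely elliptic operator, so standard techniques apply once the nonlinearity is handled.

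First, by Lemma \ref{H2-estimate-under-assumption} the sequence $\{f_k\}$ is bounded in $H^2(X)$. Weak $H^2$-compactness and Rellich--Kondrachov give a subsequence (still denoted $\{f_k\}$) with $f_k \rightharpoonup f_\infty$ in $H^2$, $f_k \to f_\infty$ strongly in $L^p$ for $p$ below the Sobolev exponent, and $f_k \to f_\infty$ almost everywhere. The pointwise bound $f_k \leq C_0$ passes to $f_\infty \leq C_0$ a.e., so $\exp(2 f_k/n)$ is uniformly bounded and dominated convergence upgrades its convergence to $L^p(X)$ for every $p < \infty$. In particular the constraint \eqref{equ1-007} is preserved: $\int_X \exp(2 f_\infty/n) d\mu = 1$.

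The key nonlinear step is passing to the limit in $S_k \exp(2 f_k/n)$. Using the conservation law $\int_X S_k \exp(2 f_k/n) d\mu = \lambda$ from Lemma \ref{constant-integral} together with $\int_X S_k^2 \exp(2 f_k/n) d\mu \to \lambda^2$ from \eqref{conditions}, expansion yields
\[ \int_X (S_k - \lambda)^2 \exp(2 f_k/n) d\mu = \int_X S_k^2 \exp(2 f_k/n) d\mu - 2\lambda^2 + \lambda^2 \to 0, \]
that is, $(S_k - \lambda)\exp(f_k/n) \to 0$ strongly in $L^2(X)$. Multiplying by the $L^\infty$-bounded factor $\exp(f_k/n)$ gives $(S_k - \lambda)\exp(2 f_k/n) \to 0$ in $L^2$, and combining with the $L^2$ convergence $\lambda \exp(2 f_k/n) \to \lambda \exp(2 f_\infty/n)$ one obtains $S_k \exp(2 f_k/n) \to \lambda \exp(2 f_\infty/n)$ in $L^2(X)$. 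Since $\Delta f_k \rightharpoonup \Delta f_\infty$ weakly in $L^2$ while the right-hand side of the PDE converges strongly to $\baseChernScalar - \lambda \exp(2 f_\infty/n)$, we conclude $-\Delta f_\infty + \baseChernScalar = \lambda \exp(2 f_\infty/n)$ a.e. on $X$.

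Finally, the right-hand side lies in $L^\infty(X)$ thanks to $f_\infty \leq C_0$, so elliptic $W^{2,p}$-theory places $f_\infty$ in $W^{2,p}$ for every $p < \infty$, hence in $C^{1,\alpha}$; then $\exp(2 f_\infty/n) \in C^{1,\alpha}$, Schauder gives $f_\infty \in C^{3,\alpha}$, and the standard bootstrap yields $f_\infty \in C^\infty(X)$. The only non-routine point is the convergence of the nonlinear term $S_k \exp(2 f_k/n)$: neither $S_k$ nor $\exp(2 f_k/n)$ has sufficient individual strong control. What rescues the argument is the Cauchy--Schwarz deficit
\[ \int_X S_k^2 \exp(2 f_k/n) d\mu - \lambda^2 \to 0, \]
which is precisely the vanishing of $-(d/dt)\mathcal{F}(t_k)$ and upgrades weak information on $S_k$ to the strong $L^2$ convergence needed to identify the limit equation.
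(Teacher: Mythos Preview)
Your argument is correct and, in the key nonlinear step, more direct than the paper's. Both proofs start from Lemma~\ref{H2-estimate-under-assumption} and end with the same elliptic bootstrap; the difference lies in how the limit equation is identified. The paper invokes Egorov's theorem to get uniform convergence of $f_k$ on large sets $\Omega_\delta$, then uses weak lower semicontinuity of the weighted norm $\int (\Delta f_k - \baseChernScalar)^2 e^{-2f_\infty/n}$ to obtain the inequality $\int_X (\Delta f_\infty - \baseChernScalar)^2 e^{-2f_\infty/n}\,d\mu \le \lambda^2$, and finally expands $\int_X \bigl(\Delta f_\infty - \baseChernScalar + \lambda e^{2f_\infty/n}\bigr)^2 e^{-2f_\infty/n}\,d\mu \le 0$ to force the equation. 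You instead exploit the variance identity $\int_X (S_k-\lambda)^2 e^{2f_k/n}\,d\mu \to 0$ together with the uniform bound $e^{f_k/n}\le e^{C_0/n}$ to obtain \emph{strong} $L^2$ convergence of $S_k e^{2f_k/n}$ to $\lambda e^{2f_\infty/n}$, so that the PDE passes to the limit immediately. This avoids Egorov and the lower-semicontinuity detour. One small remark: you cite Lemma~\ref{constant-integral} for $\int_X S_k e^{2f_k/n}\,d\mu = \lambda$, which is phrased for the flow; since the proposition is stated for an abstract sequence satisfying \eqref{conditions}, it is cleaner to note that this identity holds for \emph{any} $f$ because $S\,e^{2f/n} = \baseChernScalar - \Delta_d f$ and $\int_X \Delta_d f\,d\mu = 0$ on a balanced manifold.
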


\begin{proof}
By Lemma \ref{H2-estimate-under-assumption}, we have $\norm{f_k}_{H^2(X)} \leq C$. Hence, by passing to a subsequence if necessary, we have
$f_k \rightharpoonup f_\infty$ weakly in $H^2(X)$ for some $f_\infty$. It follows that $f_k \rightarrow f_\infty$ strongly in $L^2(X)$ and $\Delta f_k \rightharpoonup \Delta f_\infty$ weakly in $L^2(X)$. As a result of the strong convergence in $L^2(X)$, by passing to a subsequence if necessary, we have 
$f_k \rightarrow f_\infty$ $d\mu$-a.e.. Then by Egonov's theorem, for any $\delta > 0$, there exists a subset $\Omega_\delta \subset X$ with $\Vol(X \backslash \Omega_\delta) < \delta$, such that $f_k \rightarrow f_\infty$ uniformly on $\Omega_\delta$. We have
\begin{equation*}
\begin{split}
&\int_{\Omega_\delta} (\Delta f_k - \baseChernScalar)^2 \exp(-2f_k/n) d\mu \\
&= \int_{\Omega_\delta} (\Delta f_k - \baseChernScalar)^2 \exp(-2f_\infty/n) d\mu \\
&\quad + \int_{\Omega_\delta} (\Delta f_k - \baseChernScalar)^2 \big( e^{-2f_k/n} - e^{-2f_\infty /n} \big) d\mu \\
&\geq \int_{\Omega_\delta} (\Delta f_k - \baseChernScalar)^2 \exp(-2f_\infty/n) d\mu 
- C\norm{e^{-2f_k/n} - e^{-2f_\infty /n}}_{L^\infty(\Omega_\delta)}.
\end{split}
\end{equation*}
Hence,
\begin{equation*}
\begin{split}
&\liminf_{k \to \infty} \int_{\Omega_\delta} (\Delta f_k - \baseChernScalar)^2 \exp(-2f_k/n) d\mu \\
&\geq \liminf_{k \to \infty} \int_{\Omega_\delta} (\Delta f_k - \baseChernScalar)^2 \exp(-2f_\infty/n) d\mu \\
&\geq \int_{\Omega_\delta} (\Delta f_\infty - \baseChernScalar)^2 \exp(-2f_\infty/n) d\mu.
\end{split}
\end{equation*}
Notice that
\begin{equation*}
\int_{X} (\Delta f_k - \baseChernScalar)^2 \exp(-2f_k/n) d\mu
= \int_X S_k^2 \exp(2f_k/n) d\mu \rightarrow \lambda^2, \text{~~as~} k \rightarrow \infty.
\end{equation*}
Hence,
\begin{equation*}
\int_{\Omega_\delta} (\Delta f_\infty - \baseChernScalar)^2 \exp(-2f_\infty/n) d\mu \leq \lambda^2.
\end{equation*}
Let $\delta \rightarrow 0$, we obtain that 
\begin{equation}\label{limit-integral-1}
\int_X (\Delta f_\infty - \baseChernScalar)^2 \exp(-2f_\infty/n) d\mu \leq \lambda^2.
\end{equation}
Note that $f_k \rightarrow f_\infty$ $d\mu$-a.e., and $f_k \leq C_0$ by assumption, we have $f_\infty \leq C_0$ $d\mu$-a.e.. Then by Dominance Convergence Theorem, we have
\begin{equation}\label{limit-integral-2}
\int_X \exp(2 f_\infty/n) d\mu = \lim_{k \to \infty} \int_X \exp(2 f_k/n) d\mu = 1.
\end{equation}
By \eqref{limit-integral-1} and \eqref{limit-integral-2}, we have
\begin{equation*}
\int_X \Big(\Delta f_\infty - \baseChernScalar + \lambda \exp(2 f_\infty / n)\Big)^2 \exp(-2f_\infty/n) d\mu \leq 0.
\end{equation*}
It follows that the equality holds and 
\begin{equation}
\Delta f_\infty - \baseChernScalar + \lambda \exp(2 f_\infty/n) = 0, ~d\mu-\text{a.e.}.
\end{equation}
Since $f_\infty \leq C_0$ $d\mu$-a.e., we have $\Delta f_\infty = \baseChernScalar - \lambda \exp(2 f_\infty/n) \in L^\infty(X)$. Hence, $f_\infty \in W^{2, p}(X)$ for any $p > 1$. By Sobolev embedding theorem, this implies that $f_\infty \in C^{1,\alpha}(X)$. Then $\Delta f_\infty \in C^{1,\alpha}(X)$. By the standard bootstrapping argument, we eventually have $f_\infty \in C^\infty(X)$. This finishes the proof.
\end{proof}


\end{document}